\DeclareSymbolFontAlphabet{\Bbb}{AMSb}
\newlength{\myleftmargin}
\DeclareSymbolFontAlphabet{\Bbb}{AMSb}
\newtheorem{theorem}{Theorem}
\newlength{\fixboxwidth}
\definecolor{darkgreen}{rgb}{0,0.6,0}
\newcommand{\N}{\mathbb{N}}
\newcommand{\R}{\mathbb{R}}
\DeclareMathOperator{\id}{id}
\newcommand{\snorm}[1]{\Vert #1 \Vert}
\newcommand{\inorm}[1]{\Vert #1 \Vert_\infty}
\newcommand{\ns}{m}
\newcommand{\xs}{X_*}
\newcommand{\xns}{X_{\ns}}
\newcommand{\cuc}{C_{\mathrm uc}(\xns)}
\newcommand{\cxs}{C(\xs)}
\newcommand{\hns}{H_m}
\newcommand{\kns}{k_m}
\title{Reproducing Kernel Hilbert Spaces Cannot Contain  all Continuous Functions on a Compact Metric Space}
\author{Ingo Steinwart\\
Institute for Stochastics and Applications\\
Faculty 8: Mathematics and Physics\\
University of Stuttgart\\
D-70569 Stuttgart Germany \\
\texttt{\small ingo.steinwart@mathematik.uni-stuttgart.de}
}
\begin{document}

\maketitle

\begin{abstract}
Given an uncountable, compact metric space, we show that there exists no reproducing
kernel Hilbert space that contains the space of all continuous functions on this compact space.
\end{abstract}

\vspace*{2ex}

Given a compact metric space $(X,d)$, we denote
the space of all continuous function $f:X\to \R$   by  $C(X)$.
If $k:X\times X\to \R$ is a continuous reproducing kernel, see e.g.~\cite{Aronszajn50a}
and \cite[Ch.~4]{StCh08} for an introduction, then it is well-known that its reproducing kernel Hilbert space (RKHS) $H$
satisfies $H\subset C(X)$. This raises the question, whether an inverse inclusion is also possible for  
suitable RKHSs. The following theorem provides a negative answer to this question.

\begin{theorem}
 Let $(X,d)$ be a compact metric space. If $X$ is uncountable, then there exists no 
 RKHS  $H$ on $X$ such that $C(X) \subset H$.
\end{theorem}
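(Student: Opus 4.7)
My plan is to argue by contradiction. Suppose $C(X) \subset H$, and denote the inclusion by $\iota : C(X) \to H$. The first step is to show that $\iota$ is bounded, via the closed graph theorem. If $(f_n) \subset C(X)$ with $f_n \to f$ in $\|\cdot\|_\infty$ and $f_n \to g$ in $H$, then at each $x \in X$ uniform convergence gives $f_n(x) \to f(x)$, while the reproducing property gives $f_n(x) = \langle f_n, k_x\rangle_H \to \langle g,k_x\rangle_H = g(x)$ (writing $k_x := k(\cdot,x)$). Hence $f = g$ pointwise, the graph of $\iota$ is closed, and the closed graph theorem yields some $M > 0$ with $\|f\|_H \leq M\|f\|_\infty$ for all $f \in C(X)$.

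Next, let $V = \overline{C(X)}^{\,H}$ denote the closure of (the image of) $C(X)$ inside $H$. Since $X$ is compact metric, $C(X)$ is separable, and hence so is $V$. Let $P : H \to V$ be the orthogonal projection. For any $g \in C(X) \subset V$ and $x \in X$, self-adjointness of $P$ combined with $Pg = g$ yields
\[
\langle g, Pk_x\rangle_H \;=\; \langle Pg, k_x\rangle_H \;=\; \langle g, k_x\rangle_H \;=\; g(x).
\]
Consequently, for distinct $x,y \in X$ and any $g \in C(X)$ with $\|g\|_\infty \leq 1$,
\[
|g(x)-g(y)| \;=\; |\langle g,Pk_x-Pk_y\rangle_H| \;\leq\; \|g\|_H\,\|Pk_x-Pk_y\|_H \;\leq\; M\,\|Pk_x-Pk_y\|_H.
\]
By Tietze's extension theorem in the metric space $X$ there exists such a $g$ with $g(x)=1$ and $g(y)=-1$, which gives $\|Pk_x-Pk_y\|_H \geq 2/M$. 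Moreover $x\mapsto Pk_x$ is injective because $C(X)$ separates points of $X$. Hence $\{Pk_x : x \in X\}$ is an uncountable $(2/M)$-separated subset of the separable metric space $V$, which is impossible.

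The main obstacle I anticipate is that $H$ itself need not be separable, and since the kernel $k$ is not assumed continuous, the function $k_x$ need not lie in $V$. The direct version of the separation estimate, $\|k_x-k_y\|_H \geq 2/M$, therefore does not immediately contradict anything. The projection onto $V$ is what remedies both issues simultaneously: because $g \in C(X)$ satisfies $Pg = g$, projection preserves exactly the reproducing identity $\langle g, Pk_x\rangle_H = g(x)$ needed for the separation lower bound, and the contradiction then follows from uncountability of $X$ versus separability of $V$.
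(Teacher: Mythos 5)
Your proof is correct, and it takes a genuinely different route from the paper's. Both arguments begin the same way, with the closed graph theorem giving $\snorm f_H\leq M\inorm f$ (your verification of graph-closedness via the reproducing property is exactly right), but the contradictions are different. The paper contradicts \emph{reflexivity}: it needs the reverse inequality $\inorm f\leq c\,\snorm f_H$ as well, which forces it to first pass to an uncountable sub-domain $X_m=\{x:k(x,x)\leq m\}$ where the kernel is bounded, work with the restricted RKHS and the space of uniformly continuous functions on $X_m$ (identified with $C(\overline{X_m})$ via extension), and then conclude that this non-reflexive Banach space carries an equivalent Hilbert norm. You instead contradict \emph{separability}: since only an upper bound on $\snorm g_H$ enters your Cauchy--Schwarz estimate, you never need to control $k(x,x)$, so the decomposition into the sets $X_n$ and the restricted-kernel machinery disappear entirely. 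The key device making this work is the orthogonal projection $P$ onto the separable closure $V$ of $C(X)$ in $H$: because $Pg=g$ for $g\in C(X)$, self-adjointness transfers the reproducing identity to $\langle g,Pk_x\rangle_H=g(x)$, and Urysohn-type separating functions then show that $\{Pk_x:x\in X\}$ is an uncountable $(2/M)$-separated subset of the separable space $V$, which is impossible. Each approach buys something: the paper's reflexivity argument adapts directly to the companion result on $\ell_\infty(X)$ for an arbitrary uncountable set $X$ (where $C(X)$-separability is unavailable), while yours is shorter for the compact metric case, avoids the sub-domain reduction and the extension-of-uniformly-continuous-functions step, and uses the uncountability hypothesis in a more transparent way.
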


\begin{proof}
 Let us assume that there was an RKHS  $H$ on $X$ such that $C(X) \subset H$. Denoting 
 the kernel of $H$ by $k$, we define 
 \begin{align*}
  X_n := \bigl\{x\in X: k(x,x)\leq n\bigr\}
 \end{align*}
 for all $n\in \N$. Since $X= \bigcup_{n\geq 1} X_n$, we then find an $\ns\in \N$ such that 
 $\xns$ is uncountable. We write $\xs := \overline\xns$ for
  the closure of $\xns$ in $X$. Moreover, we define
  \begin{align*}
   \cuc := \bigl\{ f:\xns\to \R\,\,\bigl| \,\,f \mbox{ is uniformly continuous }\bigr\}\, .
  \end{align*}
 Then it is well known, see e.g.~\cite[p.~195]{Kelley55} that for every $f\in \cuc$ there is a unique, uniformly 
 continuous $\hat f:\xs\to \R$
 such that the restriction $\hat f_{|\xns}$ of $\hat f$ to $\xns$ satisfies 
 $\hat f_{|\xns} = f$. Since $\xs$ is compact, $\hat f$ is bounded, and hence $f$ is bounded. 
 Consequently, we can equip $\cuc$ with the supremums-norm $\inorm\cdot$, and we then easily
   conclude that this extension operation gives an isometric linear operator 
 \begin{align*}
  \hat \cdot\, : \cuc &\to \cxs \\
  f&\mapsto \hat f\, .
 \end{align*}
Moreover, the compactness of $\xs$ ensures that every $f\in \cxs$ is uniformly continuous, and hence this 
extension operator is actually an isometric isomorphism. 
In particular, $\cuc$ is a Banach space. In addition, \cite[Example 1.11.25]{Megginson98} shows that 
$\cxs$ is not reflexive since 
$\xs$ is infinite, and 
consequently, $\cuc$ is not reflexive.

%

%
%

Let us now recall from e.g.~\cite[p.~351]{Aronszajn50a} that the RKHS of the restricted kernel
$\kns:=k_{|\xns\times \xns}$ is given by 
\begin{align*}
\hns := \bigl\{ h_{|\xns}\,\, \bigl|\,\, h\in H\bigr\}\, .
\end{align*}
Since $\kns$ is bounded by construction, namely $k(x,x) \leq \ns$ for all $x\in \xns$, we conclude 
that $\inorm h \leq \sqrt \ns \,\snorm h_{\hns}$ for all $h\in \hns$, see  
e.g.~\cite[Lemma 4.23]{StCh08}.

Our next goal is to show $\cuc \subset \hns$. To this end, we fix an $f\in \cuc$. Since $\xs$
is, as a metric space,  a normal space, see e.g.~\cite[Thm.~2.6.1]{Dudley02}, 
Tietze's extension theorem, see e.g.~\cite[Thm.~2.6.4]{Dudley02},
 then 
gives a  $g\in C(X)$ with $g_{|\xs} = \hat f$.
By our initial assumption $C(X)\subset H$ we then have $g\in H$, and hence we find the desired
$f =  \hat f_{|\xns} =     g_{|\xns} \in \hns$. 

Now recall that we have already seen that $\cuc$ is a Banach space.
A simple application of the closed graph theorem then shows that 
the inclusion map $\id: \cuc \to \hns$ is continuous. For the constant $K:= \snorm{\id: \cuc \to \hns} <\infty$ we thus have 
$\snorm f_{\hns} \leq K \inorm f$ for all $f\in \cuc$.

In summary, we have show that 
for all $f\in \cuc$ we have 
 \begin{align*}
 \ns^{-1/2}\,   \inorm {f}  \leq  \snorm {f}_{\hns} \leq K \inorm f \, , 
 \end{align*}
i.e.~$ \snorm {\cdot}_{\hns}$ is an equivalent, and thus complete,  Hilbert space norm on $\cuc$. 
Consequently, $\cuc$ is reflexive, which contradicts our previous finding that $\cuc$ is not reflexive.
%
%
\end{proof}

%

Finally, we like to remark that 
the following theorem can be shown analogously.

\begin{theorem}
 Let $X$ be an uncountable set and $\ell_\infty(X)$ be the space of all bounded functions $f:X\to \R$. Then there exists no RKHS $H$ on $X$ such that  $\ell_\infty(X) \subset H$.
\end{theorem}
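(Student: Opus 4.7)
The plan is to mirror the proof of Theorem~1 step by step, with $\ell_\infty(X)$ replacing $C(X)$ throughout. Assuming for contradiction that there is an RKHS $H$ on $X$ with kernel $k$ satisfying $\ell_\infty(X)\subset H$, I again set $X_n:=\{x\in X:k(x,x)\le n\}$. Since $X=\bigcup_{n\ge 1}X_n$ is uncountable, some $X_m$ is uncountable. Setting $k_m:=k_{|X_m\times X_m}$ and $H_m:=\{h_{|X_m}:h\in H\}$ yields the RKHS of $k_m$, and boundedness of $k_m$ by $m$ gives $\inorm h\le \sqrt m\,\snorm h_{H_m}$ for every $h\in H_m$, exactly as before.

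The analogue of the Tietze extension step becomes completely trivial in this setting: given $f\in\ell_\infty(X_m)$, define $g:X\to\R$ by $g=f$ on $X_m$ and $g=0$ elsewhere. Then $g\in\ell_\infty(X)$, so by assumption $g\in H$, and consequently $f=g_{|X_m}\in H_m$. This establishes $\ell_\infty(X_m)\subset H_m$. Since $\ell_\infty(X_m)$ is a Banach space under $\inorm\cdot$, a closed graph argument identical to the one in the previous proof shows that the inclusion $\id:\ell_\infty(X_m)\to H_m$ is bounded, and combining this with the reverse estimate produces constants giving
\begin{align*}
 m^{-1/2}\inorm f \,\leq\, \snorm f_{H_m} \,\leq\, K \inorm f \qquad \text{for all } f\in\ell_\infty(X_m).
\end{align*}
Hence $\snorm\cdot_{H_m}$ is an equivalent Hilbert space norm on $\ell_\infty(X_m)$, forcing $\ell_\infty(X_m)$ to be reflexive.

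To conclude, I need to contradict this reflexivity. This is where the present proof diverges from Theorem~1: instead of appealing to the non-reflexivity of $C(\xs)$ via \cite{Megginson98}, I would invoke the classical fact that $\ell_\infty(Y)$ is non-reflexive for every infinite set $Y$. Since $X_m$ is uncountable, hence infinite, one can fix a countably infinite $Y_0\subset X_m$ and observe that the functions in $\ell_\infty(X_m)$ vanishing off $Y_0$ form a closed subspace isometrically isomorphic to the non-reflexive space $\ell_\infty(\N)$; reflexivity passes to closed subspaces, giving the contradiction. The main (minor) obstacle is thus just locating or stating this reflexivity fact cleanly; the structural core of the argument transfers verbatim from the previous proof, with the zero-extension replacing Tietze's theorem and uncountability of $X$ used only to guarantee that some $X_m$ is infinite.
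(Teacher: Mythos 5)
Your proof is correct and is precisely the ``analogous'' argument the paper alludes to for this theorem: the same Baire-type decomposition $X=\bigcup_n X_n$, restriction to $H_m$, closed graph argument, and reflexivity contradiction, with zero-extension replacing Tietze and the non-reflexivity of $\ell_\infty(\N)$ (via a closed subspace) replacing that of $C(\xs)$. No gaps.
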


\noindent
\textbf{Acknowledgment.} I thank Daniel Winkle for bringing up this question and 
 Andrey Kharitenko for a valuable discussion that lead to a shorter proof.

\bibliographystyle{plain}
\bibliography{steinwart-mine,steinwart-books,steinwart-article}



\end{document}